 \newtheorem{thm}{Theorem}[section]
 \newtheorem{defn}[thm]{Definition}
 \newtheorem{rem}[thm]{Remark}
\title{Opial's inequality in $q$-Calculus revisited}
\author{Tatjana Z. Mirkovi\'c$^1$, Slobodan B. Tri\v ckovi\'c$^2$ Miomir S. Stankovi\'c$^3$,\\[0.5mm]
$^1$\small School of Electrical Engineering, Belgrade, Serbia,\\[0.5mm]
$^2$\small University of Ni\v s, Department of Mathematics, Ni\v s, Serbia\\[0.5mm]
$^3$\small Mathematical Institute of the Serbian Academy of Sciences and Arts, Belgrade, Serbia}
\date{}%
\begin{document}

\maketitle

\begin{abstract} We have fundamentally corrected the proofs of the theorems from our paper \cite{q-O}
by giving an entirely different approach, using quite a simple method based on applications of some
elementary inequalities, well-known H\"older's inequality and the Gauchman $q$-restricted integral.
\end{abstract}

%\begin{abstract} We give fundamental corrections concerned with the proofs of the theorems from our paper \cite{q-O},
%where, by using quite an elementary method based on some simple observations and applications of some well-known
%inequalities, a new general Opial type integral inequality in $q$-Calculus was established, Opial inequalities
%in $q$-Calculus involving two functions and their first order derivatives were investigated, and several
%particular cases were discussed.
%\end{abstract}

MSC2020: 81P68; 26D10

Keywords: q-derivative; q-integral; Opial's inequality.

\section{Introduction and preliminaries}

In the recent paper \cite{q-O} a generalization of the Opial integral inequality
\begin{eqnarray}\label{56586}
\int_0^h |f(x)f'(x)|dx\leq\frac{h}{4}\int_0^h {(f'(x))^2 dx}
\end{eqnarray}
in $q$-calculus was given. Here we eliminate some inaccuracies by simplifying and modifying the proofs of the theorems.

First of all, we present necessary definitions and facts from the $q$-calculus, where $q$ is a
real number satisfying $0 < q < 1$, and $q$-natural number is defined by
$$
[n]_q=\frac{1-q^n}{1-q}=q^{n-1}+\cdots+q+1,\dots,n\in\mathbb N.
$$

\begin{defn}\label{df1} Let $f$ be a function defined on an interval $(a,b)\subset\mathbb R$,
so that $qx\in(a,b)$ for all $x\in(a,b)$. For $0 < q <1$, we define the
$q$-derivative as
\begin{equation}\label{q-der}
(D_q f)(x)=\frac{f(x)-f(qx)}{x-qx},\quad x\ne0;\quad D_q f(0)=\lim_{x\to 0}D_q f(x).
\end{equation}
\end{defn}

In the  paper \cite{Jackson}, Jackson defined $q$-integral, which in the $q$-calculus
bears his name.

\begin{defn} The $q$-integral on $[0,a]$ is
$$
\int_0^a f(x)d_q x=a(1-q)\sum_{j=0}^\infty q^j f(aq^j).
$$
\end{defn}

On this basis, in the same paper, Jackson defined an integral on $[a,b]$
\begin{equation}\label{jackson-ab}
\int_a^b f(x)d_q x=\int_0^b f(x)d_q x-\int_0^a f(x)d_q x,
\end{equation}

For a positive integer $n$ and $a=bq^n$, using the left-hand side integral
of (\ref{jackson-ab}), in the paper \cite{Gauchman}, Gauchman introduced
the $q$-restricted integral
\begin{equation}\label{56}
\int_a^b f(x)d_q x=\int_{bq^n}^b f(x)d_q x=b(1 -q)\sum_{j=0}^{n-1}q^j f(q^j b).
\end{equation}

\begin{defn}The real function $f$ defined on $[a,b]$ is called $q$-increasing
{\rm (}$q$-decreasing{\rm)} on $[a,b]$ if$f(qx)\le f(x)$ {\rm (}$f(qx)\ge f(x)${\rm)}
for $x,\,qx \in [a,b]$.
\end{defn}

It is easy to see that if the function $f$ is increasing {\rm (}decreasing), then it is
$q$-increasing {\rm (}$q$-decreasing{\rm)} too.

\section{Results and discussions}

Our main results are contained in three theorems.

\begin{thm}\label{t3.1} Let $f(x)$ be $q$-decreasing function on $[a,b]$ with $f(bq^0)=0$.
Then, for any $p\ge 0$, there holds
\begin{eqnarray}\label{52}
\int_a^b|D_q f(x)||f(x)|^p d_q x\leq(b-a)^p\int_a^b|D_q f(x)|^{p+1}d_q x.
\end{eqnarray}
\end{thm}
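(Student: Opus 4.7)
My plan is to convert (\ref{52}) into a finite-sum inequality by exploiting Gauchman's convention $a = bq^n$, and then to dispatch it with a single application of Hölder's inequality. Setting
\[
w_j := q^j b(1-q), \qquad d_j := |D_q f(q^j b)|, \qquad j = 0, 1, \dots, n-1,
\]
formula (\ref{56}) lets me rewrite the two sides of (\ref{52}) as $\sum_{j=0}^{n-1} w_j d_j |f(q^j b)|^p$ and $\sum_{j=0}^{n-1} w_j d_j^{p+1}$, while $b - a = b(1 - q^n) = \sum_{j=0}^{n-1} w_j$.

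Next I would use the hypotheses to telescope. The assumption $f(b) = 0$ together with the $q$-decreasing property gives $0 = f(b) \le f(qb) \le f(q^2 b) \le \cdots$, so on the Jackson grid $f \ge 0$ and $D_q f \le 0$; hence (\ref{q-der}) yields $w_j d_j = f(q^{j+1} b) - f(q^j b)$, and therefore
\[
f(q^j b) = \sum_{k=0}^{j-1} w_k d_k \le \sum_{k=0}^{n-1} w_k d_k = f(a), \qquad j = 0, 1, \dots, n.
\]
Feeding the crude bound $|f(q^j b)|^p \le f(a)^p$ into the left side of (\ref{52}) reduces matters to the estimate $f(a)^{p+1} \le (b-a)^p \sum_{j=0}^{n-1} w_j d_j^{p+1}$, since $\sum_j w_j d_j = f(a)$.

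Finally, I would invoke Hölder's inequality with conjugate exponents $(p+1)/p$ and $p+1$ applied to the pairing $w_k d_k = w_k^{p/(p+1)} \cdot \bigl(w_k^{1/(p+1)} d_k\bigr)$:
\[
f(a) = \sum_{k=0}^{n-1} w_k d_k \le \Bigl(\sum_{k=0}^{n-1} w_k\Bigr)^{p/(p+1)} \Bigl(\sum_{k=0}^{n-1} w_k d_k^{p+1}\Bigr)^{1/(p+1)}.
\]
Raising to the $(p+1)$-th power gives exactly the desired inequality. The only delicate point I anticipate is the sign bookkeeping needed to strip the absolute values before telescoping; once one verifies that $f \ge 0$ on the grid, the rest is automatic, and the factor $(b-a)^p$ arises entirely from the crude pointwise estimate $f(q^j b) \le f(a)$ used in the middle step.
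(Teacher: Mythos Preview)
Your proposal is correct and follows essentially the same route as the paper: both arguments bound $|f(bq^j)|^p$ by $|f(bq^n)|^p=f(a)^p$ using the $q$-decreasing hypothesis and $f(b)=0$, telescope $f(a)=\sum_k w_k d_k$, and then apply H\"older with the splitting $w_k=w_k^{p/(p+1)}w_k^{1/(p+1)}$ to produce the factor $(b-a)^p=(\sum_k w_k)^p$. Your $w_j,d_j$ notation and explicit sign analysis make the structure more transparent, but the underlying steps are identical; one small remark is that your closing sentence slightly misattributes the origin of $(b-a)^p$, which in fact comes from the H\"older step rather than from the pointwise bound $f(q^jb)\le f(a)$.
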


\begin{proof} Using Definition \ref{df1} and \eqref{56}, we have
%{\arraycolsep0.2em
\begin{align*}
\int_a^b|D_q f(x)||f(x)|^p d_q x&=\int_{bq^n}^b\left|\frac{f(x)-f(qx)}{x-qx}\right||f(x)|^p d_q x\\
&=b(1-q)\sum_{j=0}^{n-1}q^j\left|\frac{f(bq^j)-f(bq^{j+1}}{bq^j-bq^{j+1}}\right||f(bq^j)|^p,
\end{align*}%}
whence, taking into account that $f(x)$ is $q$-decreasing, we have
$$
\sum_{j=0}^{n-1}{|f(bq^j)-f(bq^{j+1})|}|f(bq^j)|^p
\le|f(bq^n)|^p\sum_{j=0}^{n-1}{|f(bq^j)-f(bq^{j+1})|}.
$$
In view of $f(bq^n)=\sum\limits_{j=0}^{n-1}{f(bq^{j+1})-f(bq^j)}$, we obtain
$$
|f(bq^n)|^p=\Big|\sum_{j=0}^{n-1}f(bq^{j+1})-f(bq^j)\Big|^p\leq\Big(\sum_{j=0}^{n-1}\big|f(bq^j)-f(bq^{j+1})\big|\Big)^p,
$$
so that
$$
|f(bq^n)|^p\sum_{j=0}^{n-1}\big|f(bq^j)-f(bq^{j+1})\big|
\leq\Big(\sum_{j=0}^{n-1}\big|f(bq^j)-f(bq^{j+1})\big|\Big)^{p+1}.
$$
Thus
\begin{equation}\label{step1}
\int_a^b|D_q f(x)||f(x)|^p d_q x\leq\Big(\sum_{j=0}^{n-1}\big|f(bq^j)-f(bq^{j+1})\big|\Big)^{p+1}.
\end{equation}
The right-hand side of this inequality we can write in the form of
%{\arraycolsep0.15em
\begin{align*}
\Big(\sum_{j=0}^{n-1}\big|f(bq^j)-f(bq^{j+1})\big|\Big)^{p+1}
&=\Big(\sum_{j=0}^{n-1}\big|bq^j-bq^{j+1}\big|\Big|\frac{f(bq^j)-f(bq^{j+1})}{bq^j-bq^{j+1}}\Big|\Big)^{p+1}\\
&=\Big(b(1-q)\sum_{j=0}^{n-1}q^j\Big|\frac{f(bq^j)-f(bq^{j+1})}{bq^j-bq^{j+1}}\Big|\Big)^{p+1}.
\end{align*}%}
After rewriting $q^j=(q^j)^{\frac p{p+1}}(q^j)^{\frac1{p+1}}$, and applying H\"older's inequality
to the last sum, we have
\begin{multline*}
\sum_{j=0}^{n-1}(q^j)^{\frac p{p+1}}(q^j)^{\frac1{p+1}}\Big|\frac{f(bq^j)-f(bq^{j+1})}{bq^j-bq^{j+1}}\Big|\leq
\Big(\sum_{j=0}^{n-1}\big((q^j)^{\frac p{p+1}}\big)^{\frac{p+1}p}\Big)^{\frac p{p+1}}\times\\
\Big(\sum_{j=0}^{n-1}q^j\Big|\frac{f(bq^j)-f(bq^{j+1})}{bq^j-bq^{j+1}}\Big|^{p+1}\Big)^{\frac1{p+1}}.
\end{multline*}
After raising both sides to the power $p+1$, we find
$$
\Big(\sum_{j=0}^{n-1}q^j\Big|\frac{f(bq^j)-f(bq^{j+1})}{bq^j-bq^{j+1}}\Big|\Big)^{p+1}\leq
\Big(\sum_{j=0}^{n-1}q^j\Big)^p\sum_{j=0}^{n-1}q^j\Big|\frac{f(bq^j)-f(bq^{j+1})}{bq^j-bq^{j+1}}\Big|^{p+1}.
$$
Multiplying this inequality by $b^{p+1}(1-q)^{p+1}$, and relying on the formula for the sum of the first $n$ terms of the geometric series, we arrive at the inequality
\begin{multline}\label{p}
\Big(b(1-q)\sum_{j=0}^{n-1}q^j\Big|\frac{f(bq^j)-f(bq^{j+1})}{bq^j-bq^{j+1}}\Big|\Big)^{p+1}\leq
b^p(1-q^n)^p\times\\
b(1-q)\sum_{j=0}^{n-1}q^j\Big|\frac{f(bq^j)-f(bq^{j+1})}{bq^j-bq^{j+1}}\Big|^{p+1}.
\end{multline}
Considering that $b^p(1-q^n)^p=(b-bq^n)^p=(b-a)^p$, taking into account \eqref{step1},
we have proved the inequality
$$
\int_a^b|D_q f(x)||f(x)|^p d_q x\leq(b-a)^p\Big(b(1-q)
\sum_{j=0}^{n-1}q^j\Big|\frac{f(bq^j)-f(bq^{j+1})}{bq^j-bq^{j+1}}\Big|^{p+1}\Big).
$$
Referring to \eqref{56}, there holds
$$
b(1-q)\sum_{j=0}^{n-1}q^j\Big|\frac{f(bq^j)-f(bq^{j+1})}{bq^j-bq^{j+1}}\Big|^{p+1}
=\int_a^b|D_q f(x)|^{p+1}d_q x,
$$
whereby we prove the theorem.
\end{proof}

\begin{rem} In particular, by taking $p=1$, the inequality \eqref{52} in
{\rm Theorem \ref{t3.1}} reduces to the following Opial's inequality in $q$-Calculus.
\begin{eqnarray*}
\int_a^b|D_q f(x)||f(x)|d_q x\leq(b-a)\int_a^b|D_q f(x)|^2 d_q x.
\end{eqnarray*}
\end{rem}

The following theorems are concerned with $q$-monotonic functions.

\begin{thm}\label{T4} If $f(x)$ and $g(x)$ are $q$-decreasing functions on $[a,b]$
satisfying $f(bq^0)=0$ and $g(bq^0)=0$, then there holds the inequality
\begin{multline}\label{53}
\int_a^b\big(f(x)D_q g(x)+g(qx)D_q f(x)\big)d_q x\\
\leq\frac{b-a}2\int_a^b\big((D_q f(x))^2+(D_q g(x))^2\big)d_q x.
\end{multline}
\end{thm}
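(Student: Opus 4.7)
The plan is to mimic the discretization strategy of Theorem~\ref{t3.1}: expand both sides of (\ref{53}) using formula~(\ref{56}) so that the inequality reduces to a statement about finite sums indexed by $j=0,\dots,n-1$. After the factor $q^j$ coming from the measure cancels the $bq^j(1-q)$ in the denominator of $D_q f$ and $D_q g$, the left-hand side becomes
\[
\sum_{j=0}^{n-1}\Big[f(bq^j)\big(g(bq^j)-g(bq^{j+1})\big)+g(bq^{j+1})\big(f(bq^j)-f(bq^{j+1})\big)\Big].
\]

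The central observation is that the two cross terms $\pm f(bq^j)g(bq^{j+1})$ cancel, so the sum telescopes to $f(b)g(b)-f(bq^n)g(bq^n)=-f(a)g(a)$. This is exactly the $q$-integration-by-parts identity corresponding to the $q$-Leibniz rule $D_q(fg)(x)=f(x)D_q g(x)+g(qx)D_q f(x)$. Since $f,g$ are $q$-decreasing with $f(b)=g(b)=0$, the values $f(bq^j)$ and $g(bq^j)$ are nonnegative and non-decreasing in $j$, so $f(a)g(a)\ge 0$; consequently the left-hand side of (\ref{53}) is $\le 0\le f(a)g(a)$, and it suffices to bound the nonnegative quantity $f(a)g(a)$ by the right-hand side of (\ref{53}).

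For this final bound I would represent $f(a)$ as the telescoping sum $\sum_{j=0}^{n-1}\big(f(bq^{j+1})-f(bq^j)\big)$, which equals $\int_a^b |D_q f(x)|\,d_q x$ because each increment equals $-(bq^j-bq^{j+1})D_q f(bq^j)$ with $D_q f\le 0$ on the grid; the same holds for $g(a)$. Then Cauchy--Schwarz in $q$-calculus, using $\int_a^b d_q x=b-a$, yields
\[
\int_a^b |D_q f|\,d_q x\le \sqrt{(b-a)\int_a^b (D_q f)^2\,d_q x},
\]
and the analogous estimate for $g$. Multiplying the two inequalities and applying the AM--GM estimate $\sqrt{AB}\le (A+B)/2$ with $A=\int_a^b(D_q f)^2d_q x$ and $B=\int_a^b(D_q g)^2 d_q x$ produces precisely the bound $\tfrac{b-a}{2}\int_a^b\big((D_q f)^2+(D_q g)^2\big)d_q x$.

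The main obstacle is the telescoping cancellation that identifies the combined integrand on the left with $D_q(fg)$: once this discrete product rule is spotted, the only remaining work is the routine Cauchy--Schwarz plus AM--GM combination, and some sign bookkeeping to confirm that $|D_q f|$ (not $D_q f$) appears after the telescoping and that $f(a)g(a)$ majorises the signed left-hand side.
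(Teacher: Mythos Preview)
Your proposal is correct and follows essentially the paper's route: both discretize via (\ref{56}), telescope the left-hand side to $-f(bq^n)g(bq^n)$, represent $f(bq^n)$ and $g(bq^n)$ as the telescoping sums $\int_a^b|D_qf|\,d_qx$ and $\int_a^b|D_qg|\,d_qx$, and finish with Cauchy--Schwarz combined with AM--GM; the paper merely swaps the order, applying $-xy\le\tfrac12(x^2+y^2)$ first and then invoking its inequality (\ref{p}) with $p=1$, which is exactly your Cauchy--Schwarz step. Your extra remark that the left-hand side is in fact $\le 0$ is not recorded in the paper but already suffices, since the right-hand side is manifestly nonnegative.
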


\begin{proof}Replacing (\ref{q-der}) in the integral
$$
\int_a^b\big(f(x)D_q g(x)+ g(qx)D_q f(x)\big)d_q x,
$$
we obtain
$$
\int_{bq^n}^b\left(f(x)\frac{g(x)-g(qx)}{x-qx}+g(qx)\frac{f(x)-f(qx)}{x-qx}\right)d_q x,
$$
whence, using Gauchman $q$-restricted integral, we have
{\arraycolsep0.05em
\begin{eqnarray*}
b(1-q)&&\Big(\sum_{j=0}^{n-1}q^j f(bq^j)\frac{g(bq^j)-g(bq^{j+1})}{bq^j-bq^{j+1}}
+\sum_{j=0}^{n-1}q^j g(bq^{j+1})\frac{f(bq^j)-f(bq^{j+1})}{bq^j-bq^{j+1}}\Big)\\
&&=\sum_{j=0}^{n-1}\big(f(bq^j)(g(bq^j)-g(bq^{j+1}))+g(bq^{j+1})(f(bq^j)-f(bq^{j+1}))\big)\\
&&=\sum_{j=0}^{n-1}\big(f(bq^j)g(bq^j)-g(bq^{j+1})f(bq^{j+1})\big)=-f(bq^n)g(bq^n).
\end{eqnarray*}
}Using the elementary inequality $-xy\leq\frac12(x^2+y^2),\,x,y\in\mathbb R$, and considering that
$$
f(bq^n)=\sum_{j=0}^{n-1}\big(f(bq^{j+1})-f(bq^j)\big),\quad g(bq^n)=\sum_{j=0}^{n-1}\big(g(bq^{j+1})-g(bq^j)\big),
$$
we find
$$
-f(bq^n)g(bq^n)\leq\frac12\Bigg(\Big(\sum_{j=0}^{n-1}
\big(f(bq^{j+1})-f(bq^j)\big)\Big)^2+\Big(\sum_{j=0}^{n-1}\big(g(bq^{j+1})-g(bq^j)\big)\Big)^2\Bigg)
$$
Applying \eqref{p} for $p=1$, knowing that $f(x)$ and $g(x)$ are $q$-decreasing,
we obtain
$$
\Big(b(1-q)\sum_{j=0}^{n-1}q^j\frac{f(bq^{j+1})-f(bq^j)}{bq^j-bq^{j+1}}\Big)^2\leq
b(1-q^n)b(1-q)\sum_{j=0}^{n-1}q^j\Big(\frac{f(bq^j)-f(bq^{j+1})}{bq^j-bq^{j+1}}\Big)^2
$$
as well as
$$
\Big(b(1-q)\sum_{j=0}^{n-1}q^j\frac{g(bq^{j+1})-g(bq^j)}{bq^j-bq^{j+1}}\Big)^2\leq
b(1-q^n)b(1-q)\sum_{j=0}^{n-1}q^j\Big(\frac{g(bq^j)-f(bq^{j+1})}{bq^j-bq^{j+1}}\Big)^2.
$$
Since $b(1-q^n)=b-a$, making use of \eqref{56}, we have
$$
\int_a^b\big(f(x)D_q g(x)+g(qx)D_q f(x)\big)d_q x\leq
\frac{b-a}2\int_a^b\big((D_q f(x))^2+(D_q g(x))^2\big)d_q x,
$$
whereby (\ref{53}) is proved.
\end{proof}

\begin{thm} If $f(x)$ and $g(x)$ are $q$-decreasing functions on $[a,b]$
satisfying $f(bq^0)=g(bq^0)=0$, then there holds the inequality
\begin{multline}\label{51}
\int_a^b|f(x)|^s|g(x)|^t d_q x\\
\leq(b-a)^{s+t}\int_a^b\Big(\frac s{s+t}|D_q f(x)|^{s+t}d_q x+\frac t{s+t}|D_q g(x)|^{s+t}d_q x\Big).
\end{multline}
\end{thm}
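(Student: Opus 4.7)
My plan is to reduce the mixed integrand $|f|^s|g|^t$ to single-function integrals by Young's inequality and then bound each of those in terms of $q$-derivatives using the machinery already set up in Theorem~\ref{t3.1}.

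First, I would invoke the pointwise Young inequality
$$|f(x)|^s|g(x)|^t\le \frac{s}{s+t}|f(x)|^{s+t}+\frac{t}{s+t}|g(x)|^{s+t},$$
valid for $s,t\ge 0$ with $s+t>0$, and $q$-integrate over $[a,b]$. This immediately reduces \eqref{51} to proving the single-function estimate
$$\int_a^b|f(x)|^{s+t}\,d_q x\le (b-a)^{s+t}\int_a^b|D_q f(x)|^{s+t}\,d_q x,$$
together with the analogous bound for $g$.

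To establish this auxiliary inequality, I would start from the hypotheses: $f$ is $q$-decreasing with $f(bq^0)=f(b)=0$, so $f(bq^{j+1})\ge f(bq^j)$ for all $j$, and telescoping from $f(b)=0$ gives $f(bq^j)\ge 0$ and
$$|f(bq^j)|=f(bq^j)=\sum_{k=0}^{j-1}\bigl(f(bq^{k+1})-f(bq^k)\bigr)\le \sum_{k=0}^{n-1}\bigl|f(bq^{k+1})-f(bq^k)\bigr|.$$
Raising to the power $s+t$ and applying the Gauchman representation \eqref{56} term by term yields
$$\int_a^b|f(x)|^{s+t}d_q x=b(1-q)\sum_{j=0}^{n-1}q^j|f(bq^j)|^{s+t}\le b(1-q^n)\Bigl(\sum_{k=0}^{n-1}|f(bq^{k+1})-f(bq^k)|\Bigr)^{s+t},$$
since $b(1-q)\sum_{j=0}^{n-1}q^j=b(1-q^n)=b-a$. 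Now I apply inequality \eqref{p} from the proof of Theorem~\ref{t3.1} with exponent $p+1=s+t$ (so $p=s+t-1\ge 0$, which I would explicitly assume if not inherited): this gives
$$\Bigl(\sum_{k=0}^{n-1}|f(bq^{k+1})-f(bq^k)|\Bigr)^{s+t}\le (b-a)^{s+t-1}\int_a^b|D_q f(x)|^{s+t}d_q x,$$
and multiplying by the factor $b-a$ produces exactly the desired $(b-a)^{s+t}$ estimate. The same argument works verbatim for $g$.

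Combining the $q$-integrated Young inequality with the two bounds just obtained gives \eqref{51}. The only step that really demands care is the second one, where I must carefully check the sign and telescoping argument under the $q$-decreasing assumption (so that the crude bound $|f(bq^j)|\le \sum_k|\Delta f|$ is legitimate) and verify the index bookkeeping when reusing \eqref{p} with $p$ replaced by $s+t-1$; the rest is essentially bookkeeping.
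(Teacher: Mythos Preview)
Your proposal is correct and follows essentially the same route as the paper: apply Young's inequality to split off the mixed integrand, use the $q$-decreasing hypothesis together with telescoping to bound $|f(bq^j)|$ by $\sum_k|f(bq^{k+1})-f(bq^k)|$, and then invoke the H\"older step \eqref{p} with exponent $p+1=s+t$. The only cosmetic difference is that the paper passes through the intermediate bound $|f(bq^i)|\le|f(bq^n)|$ before telescoping, whereas you telescope directly; both arguments need $s+t\ge 1$ for the H\"older step, which the paper also tacitly assumes.
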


\begin{proof} First, we apply \eqref{56} to the left-hand side of \eqref{51}, and have
$$
\int_a^b|f(x)|^s|g(x)|^t d_q x=b(1-q)\sum_{i=0}^{n-1}q^i|f(bq^i)|^s|g(bq^i)|^t.
$$
For real numbers $z,w\geq0$ and $s,t>0$, we rely on the elementary inequality
$$
z^s w^t\leq\frac s{s+t}z^{s+t}+\frac t{s+t}w^{s+t}.
$$
After setting $z=(q^i)^{\frac1{s+t}}|f(bq^i)|,\,w=(q^i)^{\frac1{s+t}}|g(bq^i)|$, we find
\begin{multline*}
\sum_{i=0}^{n-1}q^i|f(bq^i)|^s|g(bq^i)|^t=\sum_{i=0}^{n-1}\big((q^i)^{\frac1{s+t}}|f(bq^i)|\big)^s
\big((q^i)^{\frac1{s+t}}|g(bq^i)|\big)^t\\
\leq\frac s{s+t}\sum_{i=0}^{n-1}q^i|f(bq^i)|^{s+t}+\frac t{s+t}\sum_{i=0}^{n-1}q^i|g(bq^i)|^{s+t}.
\end{multline*}
Considering that $f$ and $g$ are $q$-decreasing functions, so $|f(bq^i)|^{s+t}\leq|f(bq^n)|^{s+t}$ and
$|g(bq^i)|^{s+t}\leq|g(bq^n)|^{s+t}$, the last inequality becomes
$$
\sum_{i=0}^{n-1}q^i|f(bq^i)|^s|g(bq^i)|^t\leq\frac{1-q^n}{1-q}\Big(\frac s{s+t}|f(bq^n)|^{s+t}
+\frac t{s+t}|g(bq^n)|^{s+t}\Big).
$$
However, there holds
\begin{eqnarray*}
|f(bq^n)|^{s+t}=\Big|\sum\limits_{i=0}^{n-1}f(bq^{i+1})-f(bq^i)\Big|^{s+t}
\leq\Big(\sum\limits_{i=0}^{n-1}|f(bq^{i+1})-f(bq^i)|\Big)^{s+t}\\
|g(bq^n)|^{s+t}=\Big|\sum\limits_{i=0}^{n-1}g(bq^{i+1})-g(bq^i)\Big|^{s+t}
\leq\Big(\sum\limits_{i=0}^{n-1}|g(bq^{i+1})-g(bq^i)|\Big)^{s+t},
\end{eqnarray*}
so that we have
\begin{multline}\label{3}
\int_a^b|f(x)|^s|g(x)|^t d_q x=b(1-q)\sum_{i=0}^{n-1}q^i|f(bq^i)|^s|g(bq^i)|^t\\
\leq b(1-q^n)\Bigg(\frac s{s+t}\Big(\sum_{i=0}^{n-1}|f(bq^{i+1})-f(bq^i)|\Big)^{s+t}\\
\hskip5cm+\frac t{s+t}\Big(\sum_{i=0}^{n-1}|g(bq^{i+1})-g(bq^i)|\Big)^{s+t}\Bigg)\\
\hskip1cm\leq\frac{b(1-q^n)}{(s+t)}
\Big[s\Big(b(1-q)\sum_{i=0}^{n-1}q^i\Big|\frac{f(bq^i)-f(bq^{i+1})}{bq^i-bq^{i+1}}\Big|\Big)^{s+t}\\
+t\Big(b(1-q)\sum_{i=0}^{n-1}q^i\Big|\frac{g(bq^i)-g(bq^{i+1})}{bq^i-bq^{i+1}}\Big|\Big)^{s+t}\Big],
\end{multline}

Here we follow the same procedure as in the proof of Theorem \ref{t3.1}.
So, after rewriting $q^i=(q^i)^{\frac{s+t-1}{s+t}}(q^j)^{\frac1{s+t}}$, and applying H\"older's
inequality to both sums on the right side of the last inequality, for the first sum we have
%\eqref{26}with indices $s + t,$ $\frac{s + t}{s + t - 1}$,
\begin{multline*}
\sum_{j=0}^{n-1}(q^j)^{\frac{s+t-1}{s+t}}(q^j)^{\frac1{s+t}}\Big|\frac{f(bq^j)-f(bq^{j+1})}{bq^j-bq^{j+1}}\Big|\leq
\Big(\sum_{j=0}^{n-1}\big((q^j)^{\frac{s+t-1}{s+t}}\big)^{\frac{s+t}{s+t-1}}\Big)^{\frac{s+t-1}{s+t}}\times\\
\Big(\sum_{j=0}^{n-1}q^j\Big|\frac{f(bq^j)-f(bq^{j+1})}{bq^j-bq^{j+1}}\Big|^{s+t}\Big)^{\frac1{s+t}},
\end{multline*}
and for the second as well
\begin{multline*}
\sum_{j=0}^{n-1}(q^j)^{\frac{s+t-1}{s+t}}(q^j)^{\frac1{s+t}}\Big|\frac{g(bq^j)-g(bq^{j+1})}{bq^j-bq^{j+1}}\Big|\leq
\Big(\sum_{j=0}^{n-1}\big((q^j)^{\frac{s+t-1}{s+t}}\big)^{\frac{s+t}{s+t-1}}\Big)^{\frac{s+t-1}{s+t}}\times\\
\Big(\sum_{j=0}^{n-1}q^j\Big|\frac{g(bq^j)-g(bq^{j+1})}{bq^j-bq^{j+1}}\Big|^{s+t}\Big)^{\frac1{s+t}}.
\end{multline*}
We multiply both inequalities by $b(1-q)$, then raise them to the power $s+t$. Thus, we obtain
\begin{multline*}%\label{24}
\hskip-0.35cm \Big(b(1-q)\sum_{j=0}^{n-1}q^j\Big|\frac{f(bq^j)-f(bq^{j+1})}{bq^j-bq^{j+1}}\Big|\Big)^{s+t}\leq
b^{s+t-1}(1-q^n)^{s+t-1}\times\\
\hskip0.31cm b(1-q)\sum_{j=0}^{n-1}q^j\Big|\frac{f(bq^j)-f(bq^{j+1})}{bq^j-bq^{j+1}}\Big|^{s+t}
=b^{s+t-1}(1-q^n)^{s+t-1}\int_a^b|D_q f(x)|^{s+t}d_q x,
\end{multline*}
and similarly
\begin{multline*}
\hskip-0.35cm \Big(b(1-q)\sum_{j=0}^{n-1}q^j\Big|\frac{g(bq^j)-g(bq^{j+1})}{bq^j-bq^{j+1}}\Big|\Big)^{s+t}\leq
b^{s+t-1}(1-q^n)^{s+t-1}\times\\
\hskip0.31cm b(1-q)\sum_{j=0}^{n-1}q^j\Big|\frac{g(bq^j)-g(bq^{j+1})}{bq^j-bq^{j+1}}\Big|^{s+t}
=b^{s+t-1}(1-q^n)^{s+t-1}\int_a^b|D_q g(x)|^{s+t}d_q x,
\end{multline*}
so that, because $b(1-q^n)=b-a$, from \eqref{3} there follows \eqref{51}, whereby we complete the proof.
\end{proof}

\begin{rem} In the special case when $s=t=r$ and $f(x)=g(x)=h(x)$, the inequality
established in \eqref{51} reduces to the $q$-Wirtinger-type inequality
$$
\int_a^b {|h(x)|^{2r} d_q x}\leq(b-a)^{2r}\int_a^b {(D_q h(x))^{2r} d_q x}.
$$
\end{rem}

\end{document}